\documentclass[fleqn,reqno,11pt]{amsart}
\usepackage[pdftex,marginratio=1:1,width=130mm,height=190mm,top=40mm,headsep=17mm,head=7mm,footskip=11.29mm]{geometry}
\usepackage[colorlinks,linkcolor=blue,citecolor=magenta,anchorcolor=blue,bookmarksopen,pdfpagetransition={Wipe}]{hyperref}
\usepackage{graphicx}
\usepackage{amsthm}

\newtheorem{thm}{Theorem}[section]

\newtheorem{lem}[thm]{Lemma}

\theoremstyle{definition}

\numberwithin{equation}{section}

\newcommand{\dd}{\,\mathrm{ds}}
\newcommand{\B}{\bf x}
\newcommand{\bu}{\bf u}
\newcommand{\dx}{\,\mathrm{d{\bf x}}}
\begin{document}
\unitlength 1cm\begin{picture}(0,0)\put(2,1){ \scriptsize
\textbf{(CMDE)} \textsl{Computational Methods for Differential
Equations}}\end{picture}

\title[Extending a new two-grid waveform relaxation ]{Extending a new two-grid waveform relaxation on a spatial finite element discretization}%
\author[N. Habibi]{Noora Habibi$^*$}%
\address{Faculty of Applied Mathematics, Shahrood University Of Technology, P.O. Box 3619995161 Shahrood(Iran).}%
\email{habibi85nu@gmail.com}%
\thanks{$*$ corresponding}

\author[A. Mesforoush]{Ali Mesforoush}%
\address{Faculty of Applied Mathematics, Shahrood University Of Technology, P.O. Box 3619995161 Shahrood(Iran).}%
\email{ali.mesforush@shahroodut.ac.ir}%

\subjclass[2010]{65L05, 34K06, 34K28.}
\keywords{waveform relaxation method, Finite element method, multigrid acceleration.}

\begin{abstract}
In this work,  a new two-grid method presented for the elliptic partial differential equations is generalized to the time-dependent linear parabolic partial differential equations. The new two-grid waveform relaxation method uses the numerical method of lines, replacing any spatial derivative by a discrete formula, obtained here by the finite element method. A convergence analysis in terms of the spectral radius of the corresponding two-grid waveform relaxation operator is also developed. Moreover, the efficiency of the presented method and its analysis are tested, applying  the two-dimensional heat equation.
\end{abstract}
\maketitle
\section{Introduction}
The design of an efficient solver which reduce the computational cost by preserving high convergence rate is one of the challenges on these decades for the numerical simulation to the partial differential equations (PDE). To this purpose, multigrid method is one of the most popular technique to accelerate the convergence  of the iterative methods to solve the system of equations.
In \cite{22, 23}, Federenko presented the first idea and algorithm of the two-grid method. After his work, many other researchers worked on this subject. Convergence analysis, gain full multigrid (FMG), nonlinear multigrid and algebraic multigrid (AMG) are achievements of these efforts (see \cite{5,10,8,9,Mc, 30,A C}). We have many signs of progress on the multigrid method until now.  One of the last work is subjected to a work presented by Moghaderi and Dehghan in \cite{moghadery}. They presented a fast and efficient two-grid method for solving the Poisson equation. Extending their idea for time-dependent equations is the aim of our work.

Let us consider as model problem the simplest time-dependent PDE, heat equation, with homogeneous Dirichlet boundary conditions
\begin{equation}\label{1.1}
\begin{split}
&D_tu({\B},t)-\Delta u({\B},t))= f({\B},t),\quad {\B} \in \Omega,\quad t>0,\\
&u({\B},t)=0,\quad \text{on } \partial \Omega,\quad t>0,\\
&u({\B},0)=g({\B}), \quad {\B}\in \Omega,
\end{split}
\end{equation}
where $\Omega \subset \mathbb{R}^2$,  is a bounded domain with boundary $\partial \Omega$. 
In order to establish the finite element approximation of our problem, let $\Omega_h$ be a triangulation of $\Omega$, satisfying the usual admissibility
assumption, i.e. the intersections of two different elements is either empty, a vertex, or a whole edge.
  Let $V_h$ be the finite element space of continuous piecewise linear functions associated with $\Omega_h$ vanishing on the boundary $\partial \Omega$. The discrete approximation $u_h\in V_h$  solves the following problem
 \begin{align*}
\left(D_tu_h,v_h\right)+a\left(u_h,v_h\right)=(f,v_h),\quad  v_h\in V_h,
\end{align*}
where
\begin{align*}
&\left(D_tu_h,v_h\right) = \int_{\Omega} (D_tu_h) v_h\, \dx,\quad  \quad (f,v_h) = \int_{\Omega} fv_h \, \dx,\\
&a\left(u_h,v_h\right) = \int_{\Omega}\nabla u_h \cdot \nabla v_h\, \dx.
\end{align*}
Let \{$\phi_1, \ldots ,\phi_{N}$\} be the nodal basis of $V_h,$ i.e., $\phi_i({\B}_j) = \delta_{ij}$, with ${\B}_j$ an interior node of the mesh ${\Omega}_h$. 
The approximation $u_h=\sum_{i=1}^{N} u_i(t)\phi_i({\B})$ is found by solving the following set of equations,
\begin{align*}
\left(D_tu_h,\phi_j\right)+a\left(u_h,\phi_j\right)=(f,\phi_j),\quad \text{for }j=1,2,\ldots,N.
\end{align*}
We rewrite these equations in terms of the mass matrix $B=\{(\phi_i,\phi_j)\}$ and the stiffness matrix $A=\{a(\phi_i,\phi_j)\}$, in a more standard form, as a system of ordinary differential equations (ODEs)
\begin{align}\label{1.2}
B_h\dot{{\bu}}_h(t)+A_h{\bu}_h(t)=F_h(t),\quad {\bu}_h(0)=g_h,\quad t>0,
\end{align}
where ${\bu}_h(t)=[u_1(t),u_2(t),\ldots,u_{N}(t)]^T\in \mathbb{R}^{N}$ and the coefficient matrices $A_h, B_h\in \mathbb{R}^{N\times N}$ and the right hand side  $F_h(t)=[(f,\phi_1),(f,\phi_2),\ldots,$ $(f,\phi_{N})]^T$ $ \in \mathbb{R}^{N}$ are considered.

We have many choices to pick a suitable method for solving the obtained ODE system \eqref{1.2}. In general, we can divide all the well known methods into two classes: time-marching approaches and time parallel techniques. The algorithm proposed here is directly applied on the ODE problem \eqref{1.2} using the waveform relaxation method (WR). The proposed waveform relaxation technique works parallel in time.

The waveform relaxation is essentially a continuous-in-time algorithm for solving ordinary differential equations. Although utilizing semi discretization on  the original PDE, it can be practical for these types of equations, too.
The WR method is based on splitting matrices $A_h$ and $B_h$ as $B_h=M_{B_h}-N_{B_h}$ and $A_h=M_{A_h}-N_{A_h}$, leading to the following iteration
\begin{align}\label{eq1}
M_{B_h}\dot{{\bu}}_h^{(\nu)}(t)+M_{A_h}{\bu}_h^{(\nu)}(t)=N_{B_h}\dot{{\bu}}_h^{(\nu-1)}(t)+N_{A_h}{\bu}_h^{(\nu-1)}(t)+F_h(t),
\end{align}
where $ {\bu}_h^{(\nu)}(0)=g_h,$ for $\nu\geq 1$ and ${\bu}_h^{(\nu)}(t)$ indicates the approximation of ${\bu}(t)$ at iteration $\nu$. It is natural to define ${\bu}_h^0(t)$ along the whole time interval equal to the initial condition, i.e. ${\bu}_h^0(t)=g_h,\,\, t>0$. 
Considering the decomposition of matrices $A_h$ and $B_h$  as $A_h=-L_{A_h}+D_{A_h}-U_{A_h}$ and $B_h=-L_{B_h}+D_{B_h}-U_{B_h}$, the the Gauss-Seidel waveform relaxation splittings in \eqref{eq1} are as follows:
\begin{align*}
\begin{matrix}
M_{A_h}=-L_{A_h}+D_{A_h}  &  N_{A_h}=U_{A_h},\\
M_{B_h}=-L_{B_h}+D_{B_h}  &  N_{B_h}=U_{B_h},
\end{matrix}
\end{align*}
where $L_{A_h}$ and $L_{B_h}$ are strictly lower triangular matrices, $D_{A_h}$ and $D_{B_h}$ are diagonal matrices, and $U_{A_h}$ and $U_{B_h}$ are strictly upper triangular matrices.
 
 In \cite{Miekkala} Miekkala and Nevanlinna showed that convergence of this method could be slow. In \cite{Lubich} Lubich and Ostermann combined waveform relaxation by the multigrid acceleration method and presented theoretical results. The mentioned works were based on spatial finite difference discretization. The theoretical convergence analysis extensions of their work to the spatial finite element discretization were the subject of \cite{JAN1, JAN2}. They studied the convergence of the standard waveform relaxation by the spectral radius of its operator which is of linear Volterra convolution type.

We use the classical \cite{ulrich} and new two-grid technique to accelerate the convergence of the Gauss-Seidel waveform relaxation method. A classical multigrid acceleration of this method was firstly studied by Lubich and Ostermann in \cite{9} and  developed in \cite{10}, independently. 
 In order to apply a classical geometric two-grid waveform relaxation procedure the coarsening applies only on the spatial domain  and  we consider a hierarchy of two grids, defined as $\Omega_{2h}\subset\Omega_{h}$.  We obtain a new iterate ${\bu}_h^{(\nu)}$ from the former waveform ${\bu}_h^{(\nu-1)}$ in three steps: Pre-smoothing, coarse grid correction and post smoothing. In Algorithm 1 we present the classical two-grid waveform relaxation algorithm (TGW) depending on the defined Gauss-Seidel  waveform relaxation method as smoother and the rest of the operators involved in the multigrid procedure. We consider  standard coarsening for constructing the coarse meshes and discretization coarse grid approximation (DCA) in coarser grids. Regarding intergrid transfer operators, the interpolation operators are the nine point stencil operator corresponding to the linear interpolation for the two dimensional problem. The restriction operators are considered as the adjoint of the prolongation operators.
\begin{table}[ht]
\small
\begin{tabular}{l}
  \hline
  {\bf Algorithm 1} The classical two-grid waveform relaxation based on the \\
  Gauss-Seidel smoother 
  $ TGW({\bu}_h^{(\nu)}(t),F_h(t),\nu_1,\nu_2)=  {\bu}_h^{(\nu+1)}(t)$ \\
  \hline
  {\bf If} we are on the coarsest grid, then solve 
   the following   equation by a direct\\ or fast solver\\
  $\hspace{1cm}B_{h_0}\dot{{\bu}}_{h_0}^{\nu+1}(t)+A_{h_0}{\bu}_{h_0}^{\nu+1}(t)=F_{h_0}(t)$.\\
  {\bf Else}\\
  (\textit{Presmoothing})  Perform $\nu_1$ steps of Gauss-Seidel waveform relaxation, \\
  $\hspace{1cm}v_h^\nu(t)=S^{\nu_1}\left({\bu}_h^{\nu}(t) \right)$.\\
  (\textit{Coarse grid correction})\\
 $\hspace{.88cm}$Compute the defect\\
   $\hspace{1cm}\bar{d}_h^\nu(t)=F_h(t)-B_h\dot{v}_h^{\nu}(t) - A_hv^{\nu}_h(t)$\\
  $\hspace{1cm}$Restrict the defect\\ 
  $\hspace{1cm}\bar{d}_{2h}^\nu(t)=r\bar{d}_h^\nu(t)$\\
  $\hspace{1cm}$Solve the following defect equation,\\
   $\hspace{1cm}B_{2h}\dot{{e}}_{2h}^\nu(t)+A_{2h}{e}_{2h}^\nu(t)=\bar{d}_{2h}^\nu(t),\,\,{e}_{2h}^\nu(0)=0$\\
   $\hspace{1cm}$Interpolate the correction\\
   $\hspace{1cm}{e}_h^\nu(t)=p{e}_{2h}^\nu(t)$\\
  $\hspace{.9cm}$Correct the current approximation with the interpolation of the correction, \\
  $\hspace{1cm}v_h^{\nu+1}(t)=v_h^{\nu}(t)+{e}_{h}^\nu(t)$. \\
   (\textit{Postsmoothing}) Perform $\nu_2$ steps of Gauss-Seidel waveform relaxation, \\
  $\hspace{1cm}{\bu}^{\nu+1}_h(t)=S^{\nu_2}\left(v_h^{\nu+1}(t) \right)$.\\
  {\bf End If}\\
  \hline
\end{tabular}\label{alg1}
\end{table} 
 
In Algorithm 1,  using the Crank-Nicolson approach for time discretization we obtain a space-time two-grid method with coarsening only in space. Thus, we have time-line Gauss-Seidel waveform relaxation, with standard full weighting restriction and bilinear interpolation in space for data transfer between the levels in the multigrid hierarchy.

Now, we present our new two-grid waveform relaxation method (NTGW). This method is based on the two-grid scheme presented by  Algorithm 1. 
 As before we consider two nested grids $\Omega_{2h}$ and $\Omega_h$, with $\Omega_{2h}\subset\Omega_h$ and we obtain new iterate ${\bu}^{(\nu)}$ from the former   one, ${\bu}^{(\nu-1)}$.

Suppose the initial guess for solving equation \eqref{eq1} be of the form:
$$\tilde{x}(t)=x^{(\nu_1)}(t)+\mathrm{ w}\bar{x}(t),$$
considering $x^{(\nu_1)}(t)=\mathcal{K}^{\nu_1}x_0(t)$  with $x_0(t)$ an initial guess and $\mathrm{w}\neq0$  a real number that we will identify it later,  and $\mathcal{K}^{\nu_1}$ an iterative matrix of an iterative method, considered here as Gauss-Seidel method, where $\nu_1$ is the number of iterations.  $\bar{x}(t)$ can be computed using two-grid waveform relaxation for solving the following initial value problem
\begin{align*}
B_h\dot{\bu}+A_h{\bu}=\frac{1}{\mathrm{w}^2}(f_h-B_h\dot{x}^{(\nu_1)}-A_hx^{(\nu_1)}).
\end{align*} 
Then we use the classical two-grid waveform relaxation on \eqref{1.2} by the initial guess $\tilde{x}$. The algorithms of the  new two-grid waveform relaxation scheme are presented in the following.

\begin{table}[ht]
\small
\begin{tabular}{l}
  \hline
  {\bf Algorithm 2} The new two-grid waveform relaxation\\
   $NTGW({\bu}_h^{(\nu-1)},f_h,w,\nu_1,\nu_2,\nu_3)\rightarrow  {\bu}_h^{(\nu)}$\\
  \hline
  1)~~Set $x_0={\bu}_h^{(\nu-1)}$, and $x^{(\nu_1)}=\mathcal{K}^{(\nu_1)}x_0$.\hspace*{5.0cm} \\
  2)~~Put $f'_h=\frac{1}{ w^2}(f_h-B_h\dot{x}^{(\nu_1)}-A_hx^{(\nu_1)})$. \\
  \hspace*{1.58cm}$=\frac{1}{ w^2}[N_{B_h}(\dot{x}^{(\nu_1)}-\dot{x}^{(\nu_1-1)})+N_{A_h}(x^{(\nu_1)}-x^{(\nu_1-1)})]$.\\
  3)~~Compute $\bar{x}$ by $ TGW(x^{(\nu_1)},f'_h,0,\nu_2)\rightarrow \bar{x}$.\\
  4)~~$\tilde{x}=x^{(\nu_1)}+{ w}\bar{x}$.\\
  5)~~Compute ${\bu}_h^{(\nu)}$ by $ TGW(\tilde{x},f_h,0,\nu_3)\rightarrow u^{(\nu)}$. \\
  \hline
\end{tabular}
\end{table}

This paper is organized as follows. In section 2, convergence analysis of a general successive approximation scheme is presented. The analysis is based on the theory Volterra integral equation together with Laplace transformation with respect to the temporal parameter to make this approach applicable to a time-dependent problem which is considered in this work. Section 2.1 is devoted to present the convergence analysis of the classical two-grid waveform relaxation method. In section 2.2, we describe the extension of this analysis matching the new two-grid waveform relaxation. We validate our theoretical results of the new two-grid waveform relaxation by numerical simulations in section 3. We compared three methods, Guass-Seidel waveform relaxation, classical two-grid waveform relaxation and the new two-grid waveform relaxation obtaining their averaged convergence factor. 

\section{convergence analysis}
The convergence analysis of the two-grid waveform relaxation method presented here is based on the theory of Volterra integral equations together with Laplace transformation with respect to the temporal parameter to reduce the time-dependent problem to a set of time-independent problems with a complex parameter. Considering a general iteration scheme $\mathcal{H}$ as a successive approximation scheme $u^{(\nu)}=\mathcal{H}u^{(v-1)}+\phi,$ we state the main theorems used to prove the convergence results where $\mathcal{H}$ is defined as
 \begin{align}
 \mathcal{H}u=Hu+\mathcal{H}_cu,
 \end{align}
with complex matrix $H$, the linear Volterra  convolution operator $\mathcal{H}_c$  and the matrix-valued kernel $h_c$ that is:
\begin{align*}
\mathcal{H}_cu(t)=h_c\star u(t)=\int_0^th_c(t-s)u(s)\,ds.
\end{align*}
We recall that convergence of the $\mathcal{H}$ will be guaranteed  if and only if the spectral radius of it be smaller than one. Equivalently, we can consider some more applicable conditions stated by the following  Lemmas. In the case of finite time-interval, we have:
\begin{lem}\cite{JAN1}\label{lemma2.1}
Suppose $h_c\in C[0,T]$ and consider $\mathcal{H}$ as an operator in $C[0,T]$. Then, $\mathcal{H}$ is a bounded operator and $\rho(\mathcal{H})=\rho(H)$.
\end{lem}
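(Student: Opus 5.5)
The plan is to prove boundedness directly and then obtain the spectral radius from two estimates: a lower bound from eigenvectors of $H$ and an upper bound from Gelfand's formula $\rho(\mathcal{H})=\lim_{n\to\infty}\|\mathcal{H}^n\|^{1/n}$. Throughout, $C[0,T]$ carries the sup norm $\|u\|_T=\max_{t\in[0,T]}\|u(t)\|$, with a fixed vector norm on $\mathbb{C}^N$ and its induced matrix norm.

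\textbf{Boundedness.} For $u\in C[0,T]$ we have $\|Hu(t)\|\le\|H\|\,\|u(t)\|$. Since $h_c$ is continuous, $\mathcal{H}_cu(t)$ is continuous in $t$, and
\begin{align*}
\|\mathcal{H}_cu(t)\|\le \int_0^t\|h_c(t-s)\|\,\|u(s)\|\,ds\le T\,\|h_c\|_T\,\|u\|_T .
\end{align*}
Hence $\|\mathcal{H}\|\le\|H\|+T\|h_c\|_T$.

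\textbf{Lower bound $\rho(\mathcal{H})\ge\rho(H)$.} Let $Hv=\lambda v$ with $v\neq0$. Then $\lambda I-\mathcal{H}$ restricted to constant functions $c\,v$ fails to be invertible. To see this, suppose $(\lambda I-\mathcal{H})u=v$ (a constant function) had a solution. The Volterra part vanishes at $t=0$, so evaluating there gives $(\lambda I-H)u(0)=v$, which is impossible whenever $v\notin\mathrm{range}(\lambda I-H)$. Choosing $v$ outside that range, which is possible because $\lambda I-H$ is singular, shows that $\lambda I-\mathcal{H}$ is not surjective. Therefore $\lambda\in\sigma(\mathcal{H})$.

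\textbf{Upper bound $\rho(\mathcal{H})\le\rho(H)$.} Expand
\begin{align*}
\mathcal{H}^n=(H+\mathcal{H}_c)^n .
\end{align*}
Since $H$ is a constant matrix, it commutes with convolution only up to left and right multiplication, so each word in the expansion is a product of $H$'s and convolutions. A word containing $k$ copies of $\mathcal{H}_c$ is itself a convolution, with kernel bounded by
\begin{align*}
\|H^{j_0}\|\cdots\|H^{j_k}\|\,\|h_c\|_T^k\,\frac{t^{k-1}}{(k-1)!} .
\end{align*}
Next fix $\varepsilon>0$ and choose a norm with $\|H^j\|\le C(\rho(H)+\varepsilon)^j$ for all $j$. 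Summing over words gives
\begin{align*}
\|\mathcal{H}^n\|\le \sum_{k=0}^n\binom{n}{k} C^{k+1}(\rho(H)+\varepsilon)^{n-k}\frac{(\|h_c\|_T T)^k}{k!} .
\end{align*}
Because $1/k!$ decays faster than any geometric sequence, taking the $n$-th root and letting $n\to\infty$ yields $\limsup\|\mathcal{H}^n\|^{1/n}\le\rho(H)+\varepsilon$. Letting $\varepsilon\to0$ gives the upper bound.

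\textbf{Main obstacle.} The delicate step is the combinatorial bound on the mixed words. Counting words with $k$ convolutions gives $\binom{n}{k}$ placements of the convolutions among the $n$ factors. The repeated convolutions give the factorial, since the $k$-fold convolution of a bounded kernel is $O(t^{k-1}/(k-1)!)$. One must check that $\binom{n}{k}/k!$ times the geometric factors still has $n$-th root tending to $\rho(H)+\varepsilon$. I would try bounding $\binom{n}{k}\le n^k$ and then use
\begin{align*}
\max_k \frac{(na)^k}{k!}\,b^{\,n-k} ,
\end{align*}
whose $n$-th root tends to $b$ because the sum is dominated by $k=O(\sqrt n)$. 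Failing that, an alternative is to work in the weighted norm $\max_t e^{-\sigma t}\|u(t)\|$. In that norm the convolution has norm at most $\int_0^T e^{-\sigma s}\|h_c(s)\|\,ds\to0$ as $\sigma\to\infty$, so $\rho(\mathcal{H})\le\|H\|_{*}+\delta$ for arbitrary $\delta$. Combined with the choice of matrix norm, this again gives $\rho(\mathcal{H})\le\rho(H)+\varepsilon$, since the weighted norm is equivalent to the sup norm and so leaves the spectrum unchanged.
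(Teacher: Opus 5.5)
The paper gives no proof of this lemma; it quotes it from the cited reference. So your argument has to stand on its own, and it essentially does. The proof can be closed, but one verification you propose is wrong.

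\textbf{What is correct.} Boundedness is fine. The lower bound, obtained by evaluating $(\lambda I-\mathcal{H})u=w$ at $t=0$, is correct. It uses only the singularity of $\lambda I-H$, so drop the sentence about ``constant functions $cv$'', and do not reuse the letter $v$ for the vector chosen outside $\mathrm{range}(\lambda I-H)$. The combinatorial upper bound is also correct. Your justification ``$1/k!$ decays faster than any geometric sequence'' can be made rigorous in one line. Set $a=C\|h_c\|_TT$ and $b=\rho(H)+\varepsilon$, take any $M>0$, and use $1/k!\le e^MM^{-k}$ together with the binomial theorem:
\begin{align*}
\|\mathcal{H}^n\|\le Ce^M\sum_{k=0}^n\binom{n}{k}b^{n-k}\frac{a^k}{M^k}=Ce^M\Bigl(b+\frac{a}{M}\Bigr)^n .
\end{align*}
Hence $\limsup_n\|\mathcal{H}^n\|^{1/n}\le b+a/M$ for every $M$, which gives the bound $b$.

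\textbf{What fails.} The check you propose in the ``main obstacle'' paragraph does not work. Bounding $\binom{n}{k}\le n^k$ throws away a factorial. The quantity $\max_k (na)^kb^{n-k}/k!$ is then attained at $k\approx na/b$, i.e.\ $k$ proportional to $n$, not $k=O(\sqrt n)$. Its $n$-th root tends to a limit strictly larger than $b$, namely $b\,e^{a/b}$ when $a<b$. The right crude bound is $\binom{n}{k}\le n^k/k!$. The resulting factor $1/(k!)^2$ is what concentrates the sum at $k=O(\sqrt n)$, giving $\sum_k\binom{n}{k}b^{n-k}a^k/k!\le b^n e^{2\sqrt{na/b}}$.

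\textbf{Cleaner routes.} Your weighted-norm fallback is correct and is the cleaner argument. Take $\|u\|_\sigma=\max_te^{-\sigma t}\|u(t)\|_*$, where $\|\cdot\|_*$ is an adapted vector norm with $\|H\|_*\le\rho(H)+\varepsilon$. Then $\|\mathcal{H}\|_\sigma\le\rho(H)+\varepsilon+\int_0^Te^{-\sigma s}\|h_c(s)\|_*\,ds$, and the last term tends to $0$ as $\sigma\to\infty$. An even shorter route gives $\sigma(\mathcal{H})=\sigma(H)$, not just equal spectral radii: the resolvent factorization. For $\lambda\notin\sigma(H)$,
\begin{align*}
\lambda I-\mathcal{H}=(\lambda I-H)\bigl(I-(\lambda I-H)^{-1}\mathcal{H}_c\bigr).
\end{align*}
Here $(\lambda I-H)^{-1}\mathcal{H}_c$ is a Volterra convolution with continuous kernel $(\lambda I-H)^{-1}h_c$. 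It is therefore quasinilpotent: its $n$-th power has norm at most $(T\|(\lambda I-H)^{-1}h_c\|_T)^n/n!$. So the second factor is invertible, which gives $\sigma(\mathcal{H})\subseteq\sigma(H)$. Your $t=0$ argument supplies the reverse inclusion, and the word-counting is avoided entirely.
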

 Also, the next  Lemma indicates the spectral radius of operator $\mathcal{H}$ in the case of infinite-time intervals.
\begin{lem}\cite{JAN1}\label{lemma2.2}
Suppose $h_c\in L_1(0,\infty)$, and consider $\mathcal{H}$ as an operator in $L_p(0,\infty)$ with $1\leq p\leq \infty$. Then, $\mathcal{H}$ is a bounded operator with spectral radius 
\begin{equation}
\begin{split}
\rho(\mathcal{H})&=\sup_{Re(z)\geq 0}\rho({\bf H}(z))\\
                 &=\sup_{\xi\in \mathbb{R}}\rho({\bf H}(i\xi)),
\end{split}
\end{equation}
where ${\bf H}(z)=H+{\bf H}_c(z)$, and ${\bf H}_c(z)$ denotes the Laplace-transform of $h_c$.
\end{lem}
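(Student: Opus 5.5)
We prove Lemma \ref{lemma2.2} by viewing $\mathcal{H}$ as an element of a commutative Banach algebra and computing its spectrum with Gelfand theory. The ambient algebra is
$$\mathcal{A}=\{\alpha\delta+k:\ \alpha\in\mathbb{C}^{N\times N},\ k\in L_1(0,\infty;\mathbb{C}^{N\times N})\},$$
acting by convolution, with norm $|\alpha|+\|k\|_{L_1}$. In the scalar case this is the unitization of the convolution algebra $L_1(0,\infty)$. The matrix case reduces to it by working in $\mathcal{M}_N(\mathcal{A}_{\mathrm{sc}})$, where $\mathcal{A}_{\mathrm{sc}}$ denotes the scalar algebra.

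\textbf{Boundedness.} Young's inequality gives $\|h_c\star u\|_{L_p}\le \|h_c\|_{L_1}\|u\|_{L_p}$ for every $1\le p\le\infty$. Hence $\|\mathcal{H}\|\le |H|+\|h_c\|_{L_1}$, and $\mathcal{H}$ is bounded on $L_p(0,\infty)$.

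\textbf{Spectral radius.} We proceed in three steps.

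\emph{Step 1: reduce to invertibility of $\det(\lambda I-\mathbf{H}(z))$.} By the Gelfand formula, $\rho(\mathcal{H})=\lim_n\|\mathcal{H}^n\|^{1/n}$. Since $\mathcal{H}^n=H^n+(\text{an }L_1\text{ kernel})$, all powers remain in $\mathcal{A}$. It therefore suffices to determine for which $\lambda$ the element $\lambda I-\mathcal{H}$ is invertible in $\mathcal{M}_N(\mathcal{A}_{\mathrm{sc}})$. A matrix over a commutative unital algebra is invertible exactly when its determinant is invertible in the algebra. The determinant is $\det(\lambda I-H)\delta+(L_1\text{ kernel})$, and its Gelfand transform is $\det(\lambda I-\mathbf{H}(z))$.

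\emph{Step 2: identify the maximal ideal space.} For the half-line algebra, the characters are the Laplace evaluations $k\mapsto \hat k(z)$ with $\mathrm{Re}\,z\ge0$, together with the point at infinity. At infinity the character sends $k\mapsto0$, so it only sees $\delta$, and there the transform reduces to $\det(\lambda I-H)$. By Wiener--Lévy type invertibility, $\lambda I-\mathcal{H}$ is invertible in $\mathcal{A}$ if and only if $\det(\lambda I-\mathbf{H}(z))\ne0$ for all $\mathrm{Re}\,z\ge0$ and $\det(\lambda I-H)\ne0$. Since $\mathbf{H}_c(z)\to0$ as $|z|\to\infty$ in the closed right half-plane (Riemann--Lebesgue), the condition at infinity is the limit of the half-plane condition. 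Consequently the spectrum in $\mathcal{A}$ is the closure of $\bigcup_{\mathrm{Re} z\ge0}\sigma(\mathbf{H}(z))$, and
$$\rho(\mathcal{H})=\sup_{\mathrm{Re}z\ge0}\rho(\mathbf{H}(z)).$$

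\emph{Step 3: pass from $\mathcal{A}$ to $L_p$.} We must check that the spectral radius in $\mathcal{A}$ coincides with the spectral radius as an operator on $L_p$. The bound $\rho_{L_p}\le\rho_{\mathcal{A}}$ follows from the norm estimate above together with the Gelfand limit. For the reverse bound, suppose $\lambda$ is a point of the set $\overline{\bigcup_{\mathrm{Re} z\ge0}\sigma(\mathbf{H}(z))}$. Take an eigenvector $v$ of $\mathbf{H}(z)$ with eigenvalue $\lambda$ and form approximate eigenfunctions $e^{zt}v$ truncated by a cutoff $\chi_n(t)$, with $z$ purely imaginary or with $\mathrm{Re}\,z>0$ handled by reflection or adjoint arguments. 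This shows $\lambda I-\mathcal{H}$ is not boundedly invertible on $L_p$. I expect this step to be the main technical obstacle. For $\mathrm{Re}\,z>0$ the function $e^{zt}$ is not in $L_p$, so I would first try to argue via the adjoint operator, which is anti-causal and has $e^{-\bar z t}$-type eigenfunctions in $L_{p'}$. The fallback is to cite the standard Paley--Wiener/Gelfand result for Volterra convolution operators.

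\textbf{Restriction to the imaginary axis.} For fixed $\lambda$, the function $z\mapsto \det(\lambda I-\mathbf{H}(z))$ is analytic in $\mathrm{Re}\,z>0$ and continuous up to the boundary. Moreover $\rho(\mathbf{H}(z))$ is subharmonic, being the limit of $|\mathbf{H}(z)^n|^{1/n}$, whose logarithms are subharmonic. At infinity it tends to $\rho(H)$, which is itself a boundary limit as $|\xi|\to\infty$. The maximum principle for subharmonic functions on the half-plane (via a Phragmén--Lindelöf argument, using boundedness) then shows that the supremum over $\mathrm{Re}\,z\ge0$ is attained on $\mathrm{Re}\,z=0$. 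This gives the second equality.
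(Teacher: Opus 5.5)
The paper does not prove this lemma; it only cites \cite{JAN1}. Your outline is the standard argument for this result and it is correct: Gelfand/Paley--Wiener theory for the unitized half-line $L_1$ convolution algebra, followed by a maximum principle for the imaginary-axis equality. The step you left tentative is the lower bound $\rho_{L_p}(\mathcal{H})\ge\sup_{\mathrm{Re}\,z\ge 0}\rho(\mathbf{H}(z))$. It closes with the adjoint idea you proposed, and more cheaply than you expected.

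\emph{Closing Step 3.} For $\mathrm{Re}\,z>0$ and $\lambda\in\sigma(\mathbf{H}(z))$, pick $w_0\neq 0$ with $\mathbf{H}(z)^*w_0=\bar\lambda w_0$ and put $w(t)=e^{-\bar z t}w_0$. This $w$ lies in $L_{p'}(0,\infty)$ for every $1\le p\le\infty$. For $p=\infty$ you only use $w\in L_1$ as a functional, so you need no description of $L_\infty^*$. Fubini gives
\[
\int_0^\infty w(t)^*\bigl((\lambda I-\mathcal{H})u\bigr)(t)\,dt=\int_0^\infty e^{-zs}\,w_0^*\bigl(\lambda I-\mathbf{H}(z)\bigr)u(s)\,ds=0\qquad\text{for all }u\in L_p .
\]
Hence $\lambda I-\mathcal{H}$ is not onto, and $|\lambda|\le\rho_{L_p}(\mathcal{H})$. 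The map $z\mapsto\rho(\mathbf{H}(z))$ is continuous on the closed half-plane and tends to $\rho(H)$ at infinity. So the supremum over $\mathrm{Re}\,z>0$ already equals the supremum over $\mathrm{Re}\,z\ge 0$. You therefore need neither truncated exponentials on the imaginary axis nor any reflection argument.

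Truncation genuinely cannot work for $\mathrm{Re}\,z>0$. Take $H=0$, $h_c(t)=e^{-t}$ and $\lambda=\tfrac12=\mathbf{H}(1)$. Then one checks $\|(\lambda I-\mathcal{H})u\|_{L_2}=\tfrac12\|u\|_{L_2}$, so $\lambda$ is not an approximate eigenvalue; it lies in the residual spectrum.

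\emph{A small repair in your last paragraph.} A pointwise limit of subharmonic functions need not be subharmonic. Use instead that $2^{-k}\log\|\mathbf{H}(z)^{2^k}\|$ decreases in $k$ by submultiplicativity. Then $\log\rho(\mathbf{H}(z))$ is a decreasing limit of subharmonic functions, hence subharmonic (this is Vesentini's theorem). Since $\rho(\mathbf{H}(z))$ is continuous up to the point at infinity, the maximum principle on the compactified half-plane gives the second equality directly.
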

\subsection{Classical two-grid convergence }We can  rewrite the classical two-grid cycle as an explicit successive approximation method $u^{(\nu)}=\mathcal{M}u^{(\nu-1)}+\phi$ forcing the analytical solution \eqref{1.2} $u(t)=e^{-B^{-1}At}u_0+\int_0^te^{B^{-1}A(s-t)}B^{-1}f(s)\dd$, to the TGW algorithm (Algorithm~\ref{alg1}) \cite{JAN1}. So, the classical two-grid waveform relaxation operator $\mathcal{M}$ yields
\begin{align}\label{4.3}
\mathcal{M}u(t)=\mathcal{K}^{\nu_2}\mathcal{CK}^{\nu_1}u(t),
\end{align}
where $\mathcal{K}$ is the standard waveform relaxation operator given by
\begin{align}\label{k}
\mathcal{K}u(t)=M_B^{-1}N_Bu(t)+\mathcal{K}_cu(t)
\end{align}
and $\mathcal{K}_c$ is a linear Volterra convolution operator with kernel $k_c$, defined as
\begin{align}
\nonumber &\mathcal{K}_cu(t)=k_c\star u(t)=\int_0^t k_c(t-s)u(s)\,ds,\\
          &k_c(t)=e^{-M_B^{-1}M_At}M_B^{-1}(N_A-M_AM_B^{-1}N_B).
\end{align} 
 Also, $\mathcal{C}$  is the two-grid correction waveform operator
\begin{align}
\mathcal{C}u(t)=(I-pB_H^{-1}rB_h)u(t)+\mathcal{C}_cu(t),
\end{align}
where the operator $\mathcal{C}_c$ is of linear Volterra convolution type with matrix valued kernel  $c_c(t)=pe^{-B_H^{-1}A_Ht}B_H^{-1}(A_HB_H^{-1}rB_h-rA_h)$. So we can rewrite operator $\mathcal{M}$ in (\ref{4.3}) as 
\begin{align*}
\mathcal{M}u(t)=(M_{B_h}^{-1}N_{B_h})^{\nu_2}(I-pB_H^{-1}rB_h)(M_{B_h}^{-1}N_{B_h})^{\nu_1}u(t)+\mathcal{M}_cu(t),
\end{align*}
where the operator $\mathcal{M}_cu(t)$ is a linear combination of products of linear Volterra convolution operators $\mathcal{K}_c$ and $\mathcal{C}_c$.
 Its kernel and the Laplace transform of this kernel are denoted by $m_c(t)$ and ${\bf M}_c(z)$.
 
 Considering $e^{(\nu)}$ as the error of the $\nu$th two-grid waveform relaxation iterate, $e^{(\nu)}=\mathcal{M}e^{(\nu)}$, we can obtain the Laplace transform of it as 
 \begin{align*}
\tilde{e}^{(\nu)}(z)&=[ 
(M_{B_h}^{-1}N_{b_h})^{\nu_2}(I-pB_H^{-1}rB_h)(M_{B_h}^{-1}N_{b_h})^{\nu_1}\\
& +  {\bf M}_c(z) ]\tilde{e}^{(\nu-1)}(z) =
{\bf M}(z)\tilde{e}^{(\nu-1)}(z),
 \end{align*}
where
\begin{align}
&{\bf M}(z)={\bf K}^{\nu_2}(z)(I-p(zB_H+A_H)^{-1}r(zB_h+A_h)){\bf K}^{\nu_1}(z),\\
&{\bf K}=(zM_{B_h+M_{A_h}})^{-1}(zN_{B_h}+N_{A_h}).
\end{align}

Now, we can obtain the spectral radius of the classical two-grid waveform relaxation in finite- and infinite-time intervals using  Lemmas \ref{lemma2.1} and~~\ref{lemma2.2}, respectively. To be more precise, we have the following theorems \cite{JAN1}:
\begin{thm}\label{thrm1}
(finite-time interval) the two-grid waveform relaxation operator $\mathcal{M}$ is a bounded operator in $C[0,T]$ and
\begin{align*}
\rho(\mathcal{M})=\rho\left (M_{B_h}^{-1}N_{B_h})^{\nu_2}(I-pB_H^{-1}rB_h)(M_{B_h}^{-1}N_{B_h})^{\nu_1} \right).
\end{align*}
\end{thm}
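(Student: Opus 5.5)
The plan is to bring $\mathcal{M}$ into the form $\mathcal{H}u=Hu+\mathcal{H}_cu$ of Lemma \ref{lemma2.1}, with $H$ a constant matrix and $\mathcal{H}_c$ a Volterra convolution whose kernel is continuous on $[0,T]$. The conclusion, both boundedness and the formula for $\rho(\mathcal{M})$, then follows by taking $H=(M_{B_h}^{-1}N_{B_h})^{\nu_2}(I-pB_H^{-1}rB_h)(M_{B_h}^{-1}N_{B_h})^{\nu_1}$ and $\mathcal{H}_c=\mathcal{M}_c$.

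First I would check that the building blocks have this structure. By \eqref{k}, $\mathcal{K}=M_{B_h}^{-1}N_{B_h}+\mathcal{K}_c$, and the kernel $k_c(t)=e^{-M_B^{-1}M_At}M_B^{-1}(N_A-M_AM_B^{-1}N_B)$ is a matrix exponential times a constant matrix. It is therefore entire in $t$, and in particular lies in $C[0,T]$. This needs $M_{B_h}$ to be invertible. For the Gauss--Seidel splitting $M_{B_h}=D_{B_h}-L_{B_h}$ is lower triangular with the positive diagonal of the mass matrix, so it is invertible. Similarly, $\mathcal{C}=(I-pB_H^{-1}rB_h)+\mathcal{C}_c$ has the kernel $c_c(t)=pe^{-B_H^{-1}A_Ht}B_H^{-1}(A_HB_H^{-1}rB_h-rA_h)$. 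This kernel is continuous because the coarse mass matrix $B_H$ is symmetric positive definite.

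The key step is closure under composition. Take two operators of the form $\mathcal{H}_iu=H_iu+h_i\star u$ with $h_i\in C[0,T]$. Then
\[
\mathcal{H}_1\mathcal{H}_2u=H_1H_2u+(H_1h_2+h_1H_2+h_1\star h_2)\star u .
\]
This uses that a constant matrix commutes with convolution in the sense $H(h\star u)=(Hh)\star u$, and that convolution is associative. The convolution of two continuous kernels on $[0,T]$ is again continuous: it is bounded by $t\|h_1\|_\infty\|h_2\|_\infty$, and its continuity follows from uniform continuity. By induction over the $\nu_1+\nu_2+1$ factors of $\mathcal{K}^{\nu_2}\mathcal{C}\mathcal{K}^{\nu_1}$, the product equals the product of the matrix parts plus $\mathcal{M}_c$, whose kernel $m_c$ is a finite sum of products and convolutions of $k_c$, $c_c$ and constant matrices. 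Hence $m_c\in C[0,T]$.

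Finally I would apply Lemma \ref{lemma2.1} with $H$ and $h_c=m_c$, which gives both boundedness on $C[0,T]$ and $\rho(\mathcal{M})=\rho(H)$.

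The main obstacle is only bookkeeping: the noncommutativity of the matrix parts means they must be tracked in the correct order, $\nu_2$ smoothing factors on the left and $\nu_1$ on the right. I would handle this by a clean induction lemma on products of two operators rather than expanding the full product.
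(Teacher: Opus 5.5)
Your proposal is correct and follows the same route as the paper, which (citing Janssen and Vandewalle) writes $\mathcal{M}$ as the constant matrix $(M_{B_h}^{-1}N_{B_h})^{\nu_2}(I-pB_H^{-1}rB_h)(M_{B_h}^{-1}N_{B_h})^{\nu_1}$ plus a Volterra convolution $\mathcal{M}_c$ built from $\mathcal{K}_c$ and $\mathcal{C}_c$, and then applies Lemma~\ref{lemma2.1}. Your composition rule, with the mixed terms $H_1h_2$ and $h_1H_2$, supplies the detail behind the paper's unproved claim that the kernel $m_c$ is continuous.
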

\begin{thm}\label{thrm2}
(finite-time interval) Suppose all eigenvalues of $B_H^{-1}A_H$ and $M_{B_h}^{-1}M_{A_h}$  have positive real parts and consider $\mathcal{M}$ as an operator in $L_p(0,\infty)$ with $1\leq p \leq \infty$. Then, $\mathcal{M}$ is a bounded operator with spectral radius 
\begin{align*}
\rho(\mathcal{M}) &= \sup_{Re(z)\geq 0} \rho({\bf M}(z))\\
                  &=\sup_{\xi \in \mathbb{R}}\rho({\bf M}(i\xi)).
\end{align*}
\end{thm}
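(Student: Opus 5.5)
The plan is to reduce Theorem~\ref{thrm2} to Lemma~\ref{lemma2.2}. We write $\mathcal{M}=H+\mathcal{M}_c$ with the constant matrix
\begin{align*}
H=(M_{B_h}^{-1}N_{B_h})^{\nu_2}(I-pB_H^{-1}rB_h)(M_{B_h}^{-1}N_{B_h})^{\nu_1},
\end{align*}
and with $\mathcal{M}_c$ a convolution operator whose kernel is $m_c$. Once we show $m_c\in L_1(0,\infty)$, the lemma gives boundedness in $L_p(0,\infty)$ for $1\le p\le\infty$ together with the formula $\rho(\mathcal{M})=\sup_{Re(z)\ge0}\rho(H+{\bf M}_c(z))=\sup_{\xi\in\mathbb{R}}\rho(H+{\bf M}_c(i\xi))$. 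It then only remains to identify $H+{\bf M}_c(z)$ with ${\bf M}(z)$.

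\textbf{Step 1: integrability of the building-block kernels.} The kernel $k_c(t)=e^{-M_{B_h}^{-1}M_{A_h}t}M_{B_h}^{-1}(N_{A_h}-M_{A_h}M_{B_h}^{-1}N_{B_h})$ is a matrix exponential times a constant matrix. By hypothesis every eigenvalue of $M_{B_h}^{-1}M_{A_h}$ has positive real part, so there are $C>0$ and $\alpha>0$ with $\|e^{-M_{B_h}^{-1}M_{A_h}t}\|\le Ce^{-\alpha t}$, using the Jordan form, where polynomial factors are absorbed into a slightly smaller $\alpha$. Hence $k_c\in L_1(0,\infty)$. The same argument applied to $B_H^{-1}A_H$ gives $c_c(t)=pe^{-B_H^{-1}A_Ht}B_H^{-1}(A_HB_H^{-1}rB_h-rA_h)\in L_1(0,\infty)$.

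\textbf{Step 2: expanding the product.} Write $\mathcal{K}=K_0+\mathcal{K}_c$ with $K_0=M_{B_h}^{-1}N_{B_h}$, and $\mathcal{C}=C_0+\mathcal{C}_c$ with $C_0=I-pB_H^{-1}rB_h$. Expanding $\mathcal{K}^{\nu_2}\mathcal{C}\mathcal{K}^{\nu_1}$ produces the term $H$ plus a finite sum of products, each containing at least one convolution factor. Constant matrices commute with convolution in the sense that $A(k\star u)=(Ak)\star u$. In addition, $L_1(0,\infty)$ is closed under convolution, by Young's inequality $\|f\star g\|_1\le\|f\|_1\|g\|_1$. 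So every such product is convolution with an $L_1$ kernel, and therefore $m_c\in L_1(0,\infty)$.

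\textbf{Step 3: identifying the transform.} The Laplace transform converts convolution into multiplication. We compute ${\bf K}_c(z)=(zI+M_{B_h}^{-1}M_{A_h})^{-1}M_{B_h}^{-1}(N_{A_h}-M_{A_h}M_{B_h}^{-1}N_{B_h})$, which is valid for $Re(z)\ge0$ by the spectral hypothesis. A short algebraic manipulation then gives
\begin{align*}
K_0+{\bf K}_c(z)=(zM_{B_h}+M_{A_h})^{-1}(zN_{B_h}+N_{A_h})={\bf K}(z).
\end{align*}
Similarly $C_0+{\bf C}_c(z)=I-p(zB_H+A_H)^{-1}r(zB_h+A_h)$. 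For this identity, use $(zI+B_H^{-1}A_H)^{-1}B_H^{-1}=(zB_H+A_H)^{-1}$ and collect the terms into $(zB_H+A_H)^{-1}\bigl[-(zB_H+A_H)B_H^{-1}rB_h+\dots\bigr]$. Because the transform of the product operator is the product of the transforms, $H+{\bf M}_c(z)={\bf M}(z)$.

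\textbf{Main obstacle.} I expect the main obstacle to be this last algebraic verification for the coarse-grid correction symbol. One has to check carefully that
\begin{align*}
-pB_H^{-1}rB_h+p(zB_H+A_H)^{-1}(A_HB_H^{-1}rB_h-rA_h)=-p(zB_H+A_H)^{-1}r(zB_h+A_h).
\end{align*}
I would do this by multiplying the first term by $(zB_H+A_H)^{-1}(zB_H+A_H)$ and cancelling the $A_HB_H^{-1}rB_h$ contributions.
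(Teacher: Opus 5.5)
Your argument is correct, and its skeleton matches the paper's. The paper defers to \cite{JAN1} for this theorem, but it reproduces the same argument in the proof of Lemma \ref{2}: split off the constant matrix $H$, show the convolution kernel $m_c$ lies in $L_1(0,\infty)$ using the eigenvalue hypothesis, and invoke Lemma \ref{lemma2.2}.

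The difference is in how you get integrability. The paper argues in the frequency domain. The entries of ${\bf M}_c(z)={\bf M}(z)-\lim_{z\to\infty}{\bf M}(z)$ are rational functions vanishing at infinity. Their poles come from $(zB_H+A_H)^{-1}$ and $(zM_{B_h}+M_{A_h})^{-1}$, so they lie in the open left half-plane. Hence the inverse Laplace transform is a finite sum of terms $t^ke^{\lambda t}$ with $Re(\lambda)<0$.

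You argue in the time domain instead. The kernels $k_c$ and $c_c$ decay exponentially, and operators of the form ``constant matrix plus convolution with an $L_1$ kernel'' are closed under composition because $L_1(0,\infty)$ is a convolution algebra.

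What your route buys:
\begin{itemize}
\item It needs no partial fractions and no pole bookkeeping through the expanded product.
\item It would still work if the building-block kernels were merely integrable rather than exponential-polynomial.
\item It actually proves the identity $H+{\bf M}_c(z)={\bf M}(z)$, which the paper only states. This follows from the homomorphism property of the Laplace transform together with your explicit algebra for ${\bf K}(z)$ and the coarse-grid symbol, which checks out.
\end{itemize}
The paper's route is shorter: it reads integrability straight off the closed-form symbol ${\bf M}(z)$ without expanding $\mathcal{K}^{\nu_2}\mathcal{C}\mathcal{K}^{\nu_1}$ term by term.
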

\subsection{New two-grid convergence }\label{S4}
In the rest of this section, first we present a  Lemma to state the new two-grid waveform relaxation as an explicit successive approximation method. Then, we show the convergence of this method by two Lemmas which are an extension of theorems \ref{thrm1} and \ref{thrm2}, for finite- and infinite-time intervals.
\begin{lem}\label{lemNew}
The NTGW cycle can be expressed as an explicit  successive approximation method, $u^{(\nu)}=\mathcal{N}u^{(\nu-1)}+\psi$, such that the operator $\mathcal{N}$ is
\begin{align}
\mathcal{N}u(t)=\mathcal{K}^{\nu_3}\mathcal{CR}\mathcal{K}^{\nu_1}u(t),
\end{align}
with $\mathcal{K}$ the standard waveform relaxation operator \eqref{k} and $\mathcal{CR}$ the new two-grid correction waveform operator
\begin{align}
\nonumber\mathcal{CR}u(t)&=\left(I-pB_H^{-1}rB_h\right)\left(I+\mathrm{w}\mathcal{K}^{\nu_2}\left(I-\left(1+\frac{1}{\mathrm{w}^2}\right)pB_H^{-1}rB_h\right)\right)u(t)\\
&\quad+\mathcal{CR}_cu(t),
\end{align}
where $\mathcal{CR}_c$ is of linear Volterra convolution  type operator.
\end{lem}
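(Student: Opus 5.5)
We trace one NTGW cycle at the level of errors. Every stage of Algorithm 2 is an affine map on waveforms, and the exact solution ${\bu}$ of \eqref{1.2} is a fixed point of each stage. So we write $e^{(\nu)}={\bu}_h^{(\nu)}-{\bu}$ and determine the linear part of the composite map. As in the derivation of \eqref{4.3}, the steady part of each stage is obtained by setting the Volterra kernels to zero. All remaining terms are collected into convolution operators.

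\textbf{Step 1 (presmoothing).} Step 1 of Algorithm 2 gives $x^{(\nu_1)}-{\bu}=\mathcal{K}^{\nu_1}e^{(\nu-1)}=:\varepsilon$, with $\mathcal{K}$ as in \eqref{k}.

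\textbf{Step 2 (auxiliary problem).} The right-hand side $f'_h$ equals $-\frac{1}{\mathrm{w}^2}(B_h\dot\varepsilon+A_h\varepsilon)$, so the exact solution of the auxiliary IVP with zero initial data is $\bar{\bu}=-\frac{1}{\mathrm{w}^2}\varepsilon$. The call $TGW(x^{(\nu_1)},f'_h,0,\nu_2)$ starts from $x^{(\nu_1)}$, so its initial error is $x^{(\nu_1)}-\bar{\bu}=(1+\frac{1}{\mathrm{w}^2})\varepsilon$ up to the shift ${\bu}$. Applying the classical two-grid representation \eqref{4.3} with $\nu_1=0$, the error after this call is
\[
\bar{x}-\bar{\bu}=\mathcal{K}^{\nu_2}\mathcal{C}\bigl(x^{(\nu_1)}-\bar{\bu}\bigr).
\]
This requires care about the affine shift by ${\bu}$. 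I would handle it by noting that ${\bu}$ solves the unscaled problem, so the ${\bu}$-dependent inhomogeneous pieces go into $\psi$.

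\textbf{Step 3 (combination and final call).} Then $\tilde{x}-{\bu}=\varepsilon+\mathrm{w}(\bar{x}-\bar{\bu})-\frac{1}{\mathrm{w}}\varepsilon$ plus terms independent of $e^{(\nu-1)}$. The last call $TGW(\tilde{x},f_h,0,\nu_3)$ applies $\mathcal{K}^{\nu_3}\mathcal{C}$ to this error. Hence
\[
\mathcal{N}=\mathcal{K}^{\nu_3}\mathcal{C}\bigl(I+\mathrm{w}\mathcal{K}^{\nu_2}\mathcal{C}(1+\tfrac{1}{\mathrm{w}^2})-\tfrac{1}{\mathrm{w}}I\bigr)\mathcal{K}^{\nu_1}.
\]

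\textbf{Step 4 (non-convolution part).} We split each $\mathcal{C}=(I-pB_H^{-1}rB_h)+\mathcal{C}_c$ and $\mathcal{K}=M_{B_h}^{-1}N_{B_h}+\mathcal{K}_c$. Expanding the products, every term that contains at least one convolution factor is again a linear Volterra convolution operator, because compositions of convolutions and of matrices with convolutions are convolutions. These terms together form $\mathcal{CR}_c$. We keep $\mathcal{K}^{\nu_2}$ symbolically inside $\mathcal{CR}$, as the statement does, and isolate only the correction-operator structure. The convolution-free remainder then has to be rearranged into
\[
(I-pB_H^{-1}rB_h)\bigl(I+\mathrm{w}\mathcal{K}^{\nu_2}(I-(1+\mathrm{w}^{-2})pB_H^{-1}rB_h)\bigr).
\]
This rearrangement uses the projection identity $(pB_H^{-1}rB_h)^2=pB_H^{-1}rB_h$, which holds by the Galerkin relation $B_H=rB_hp$.

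\textbf{Main obstacle.} The hard step is exactly this algebraic matching. The scalar weights coming from $1/\mathrm{w}^2$ and from $\mathrm{w}$ must combine into the factor $(1+\mathrm{w}^{-2})$ multiplying the coarse projection, and the stray $-\frac{1}{\mathrm{w}}I$ term must be absorbed. I would first check the bookkeeping of how $\bar{x}$ enters TGW, in particular whether its starting guess is effectively zero or $x^{(\nu_1)}$. If a term does not match, it is because the statement implicitly treats the coarse solve as exact on $\mathrm{range}(p)$. I would verify this by computing the composition explicitly with $P=pB_H^{-1}rB_h$ and using $P^2=P$.
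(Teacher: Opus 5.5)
\textbf{The gap is in Step 4: the rearrangement you plan cannot be done.} Your Steps 1--3 are sound, and more careful than the paper's own argument. Measuring the auxiliary error against the auxiliary exact solution $-\mathrm{w}^{-2}\varepsilon$, you correctly get $\mathcal{CR}=\mathcal{C}\bigl((1-\frac{1}{\mathrm{w}})I+(\mathrm{w}+\frac{1}{\mathrm{w}})\mathcal{K}^{\nu_2}\mathcal{C}\bigr)$.

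Put $P=pB_H^{-1}rB_h$ and replace each $\mathcal{C}$ by $I-P$ modulo convolutions. Then the statement's operator minus yours is $\frac{1}{\mathrm{w}}(I-P)(I-\mathcal{K}^{\nu_2})$ plus a convolution. Its multiplicative part is $\frac{1}{\mathrm{w}}(I-P)\bigl(I-(M_{B_h}^{-1}N_{B_h})^{\nu_2}\bigr)$, which is nonzero for $\nu_2\ge 1$. For example, when $\nu_2=1$ it equals $\frac{1}{\mathrm{w}}(I-P)M_{B_h}^{-1}B_h$. This is nonzero because $M_{B_h}^{-1}B_h$ is invertible and $P$ has rank at most $N_H<N$.

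A Volterra convolution $k\star u$ vanishes at $t=0$, whereas $Gu$ does not for a constant $u$ unless $G=0$. So this difference cannot be absorbed into $\mathcal{CR}_c$, and $P^2=P$ plays no role. An inexact coarse solve is not the explanation either, since the coarse problem is solved exactly in both computations. Carried through, your computation contradicts the stated formula whenever $\nu_2\ge 1$; the two agree only for $\nu_2=0$.

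\textbf{How the paper gets the stated form.} It does not pass to errors. It writes the coarse-corrected auxiliary iterate directly as an affine function of $x^{(\nu_1)}$, namely $\mathcal{C}_1x^{(\nu_1)}+\phi_1$, where $\mathcal{C}_1=I-(1+\frac{1}{\mathrm{w}^2})P+\mathcal{C}_{1c}=(1+\frac{1}{\mathrm{w}^2})\mathcal{C}-\frac{1}{\mathrm{w}^2}I$. It then sets $\bar x=\mathcal{K}^{\nu_2}\mathcal{C}_1x^{(\nu_1)}+\mathcal{K}^{\nu_2}\phi_1$ and treats the final call the same way. This gives $\mathcal{CR}=\mathcal{C}_2(I+\mathrm{w}\mathcal{K}^{\nu_2}\mathcal{C}_1)$. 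Expanding $\mathcal{C}_2=(I-P)+\mathcal{C}_{2c}$ and $\mathcal{C}_1$ yields the statement at once, with no projection identity needed.

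The two computations part ways exactly at the $\nu_2$ post-smoothing sweeps of the auxiliary call. The paper treats them as $\mathcal{K}^{\nu_2}$ plus a term independent of $x^{(\nu_1)}$. But their source is $f'_h$, which depends linearly on $x^{(\nu_1)}$. Your error formulation keeps the resulting extra linear term $-\frac{1}{\mathrm{w}^2}(I-\mathcal{K}^{\nu_2})x^{(\nu_1)}$ in $\bar x$. That term is precisely what turns the paper's $-\frac{1}{\mathrm{w}}\mathcal{K}^{\nu_2}$ into your $-\frac{1}{\mathrm{w}}I$.

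So to prove the lemma as stated, you must adopt the paper's convention for that smoothing step and say so explicitly; the gap cannot be closed algebraically. With your accounting, the correct conclusion is the different operator $\mathcal{CR}$ you derived.
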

\begin{proof}
As we explained before, applying the classical two-grid scheme on the initial value problem \eqref{1.2}  with initial guess $\tilde{x}=x^{(\nu_1)}+\mathrm{w}\bar{x}$\, leads to the NTGW. So we will do the following steps:
\begin{itemize}
\item[1)] First, we present the explicit successive approximation method to compute $\bar{x}$,
\item[2)]Then, in a similar way we obtain the explicit successive approximation method to compute $u_h^{(\nu)}$,
\item[3)]Finally setting $\bar{x}$ in $\tilde{x}$ and considering  $\tilde{x}$ as the initial guess of the classical two-grid waveform relaxation, step 5 of Algorithm 2, we can compute the whole explicit successive approximation of the new two-grid waveform relaxation method to obtain new iterate $u^{(\nu)}$.
\end{itemize}

1)~~Consider the coarse-grid correction steps to compute $\bar{x}$, as follows:
\begin{itemize}
\item[i-]$d_h(t)=B_h\dot{x}^{(\nu_1)}(t)+A_hx^{(\nu_1)}(t)-f'_h(t),$
\item[ii-]$d_h(t)=rd_H(t),$
\item[iii-]$B_H\dot{v}(t)_H+A_Hv_H(t)=d_H(t),\quad v_H(0)=0,$
\item[iv-]$\bar{x}=x^{(\nu_1)}-pv_H.$
\end{itemize}
Substituting  the defect vector and the analytical solution $v_H$, that is
\[v_H(t)=e^{-B^{-1}At}v_{H}(0)+\int_0^te^{B^{-1}A(s-t)}B^{-1}d_H(s)\dd,\] 
into the step iv, we obtain
 \begin{align}
\nonumber\bar{x}&=x^{\nu_1}-p( \int_0^te^{B_H^{-1}A_H(s-t)}B_H^{-1}r [  (1+\frac{1}{\mathrm{w}^2})B_h\dot{x^{\nu_1}} \\
\nonumber& +(1+\frac{1}{\mathrm{w}^2})A_hx^{\nu_1}-\frac{1}{\mathrm{w}^2}f_h]\dd )\\
\nonumber&=x^{(\nu_1)}(t)+\frac{1}{\mathrm{w}^2}p\int_0^te^{B_H^{-1}A_H(s-t)}B_H^{-1}rf_h\dd\\
\nonumber &-(1+\frac{1}{\mathrm{w}^2})p\int_0^te^{B_H^{-1}A_H(s-t)}B_H^{-1}rA_hx^{(\nu_1)}(s)\dd\\
 \label{I}&-(1+\frac{1}{\mathrm{w}^2})p\int_0^te^{B_H^{-1}A_H(s-t)}B_H^{-1}rB_h\dot{x}^{(\nu_1)}(s)\dd,
\end{align}
where $p$ and $r$ are prolongation and restriction operators, respectively. In equation \eqref{I} we can omit the time derivative by replacing this equation by the following statement
\begin{align}
\nonumber &-(1+\frac{1}{\mathrm{w}^2})p\int_0^te^{B_H^{-1}A_H(s-t)}B_H^{-1}rB_h\dot{x}^{(\nu_1)}(s)\dd\\
\nonumber&=\int_0^t e^{B_H^{-1}A_H(s-t)}B_H^{-1}rB_h\dot{x}^{(\nu_1)}(s)\dd\pm e^{B_H^{-1}A_H(s-t)}B_H^{-1}A_HB_H^{-1}rB_hx^{(\nu_1)}\\
\nonumber&=\int_0^t\frac{d}{ds}\left( e^{B_H^{-1}A_H(s-t)}B_H^{-1}rB_hx^{(\nu_1)}(s)\right)\dd\\
\nonumber&-\int_0^te^{B_H^{-1}A_H(s-t)}B_H^{-1}A_HB_H^{-1}rB_hx^{(\nu_1)}(s)\dd\\
\label{I'}&=B_H^{-1}rB_hx^{(\nu_1)}(t)-e^{-B_H^{-1}A_Ht}B_H^{-1}rB_hx^{(\nu_1)}(0)\\
\nonumber&-\int_0^te^{B_H^{-1}A_H(s-t)}B_H^{-1}A_HB_H^{-1}rB_hx^{(\nu_1)}(s)\dd.
\end{align}
Replacing \eqref{I'} by \eqref{I} we obtain the explicit successive approximation of the $\bar{x}$, as follows,
\begin{align*}
\bar{x}=\mathcal{M}_1x^{(\nu_1)}(t)+\mathcal{K}^{\nu_2}\phi_1(t),
\end{align*}
where
\begin{align*}
&\mathcal{M}_1x^{(\nu_1)}(t)=\mathcal{K}^{\nu_2}\mathcal{C}_1x^{(\nu_1)}(t)\\
&\mathcal{C}_1x^{(\nu_1)}(t)=(I-(1+\frac{1}{\mathrm{w}^2})pB_H^{-1}rB_h)x^{(\nu_1)}(t)+\mathcal{C}_{1c}x^{(\nu_1)}(t),\\
&\mathcal{C}_{1c}x^{(\nu_1)}(t)=C_{1c}\star x^{(\nu_1)}(t)=\int_0^tC_{1c}(t-s)x^{(\nu_1)}(s)\dd,\\
&C_{1c}(t)=(1+\frac{1}{\mathrm{w}^2})pe^{-B_H^{-1}A_Ht}B_H^{-1}(A_HB_H^{-1}rB_h-rA_h),
\end{align*}
and
\begin{align*}
&\phi_1(t)=\\
      &(1+\frac{1}{\mathrm{w}^2})pe^{-B_H^{-1}A_Ht}B_H^{-1}rB_hx^{(\nu_1)}(0)+\frac{1}{\mathrm{w}^2}p\int_0^te^{B_H^{-1}A_H(s-t)}B_H^{-1}rf_h\,ds.
\end{align*}

By the same way, we can compute the explicit successive approximation of $u^{(\nu)}$ as follows,
\begin{align}\label{final}
u^{(\nu)}=\mathcal{M}_2\tilde{x}(t)+\mathcal{K}^{\nu_3}\phi_2(t),
\end{align}
such that
\begin{align*}
&\mathcal{M}_2\tilde{x}(t)=\mathcal{K}^{\nu_3}\mathcal{C}_2\tilde{x}(t),\\
&\mathcal{C}_2\tilde{x}(t)=(I-pB_H^{-1}rB_h)\tilde{x}(t)+\mathcal{C}_{2c}\tilde{x}(t),\\
&\mathcal{C}_{2c}\tilde{x}(t)=C_{2c}\star\tilde{x}(t)=\int_0^tC_{2c}(t-s)\tilde{x}(s)\dd,\\
&C_{2c}=pe^{-B_H^{-1}A_Ht}B_H^{-1}(A_HB_H^{-1}rB_h-rA_h),
\end{align*}
and
\begin{align*}
\phi_2(t)=pe^{-B_H^{-1}A_Ht}B_H^{-1}rB_h\tilde{x}(0)+p\int_0^te^{B_H^{-1}A_H(s-t)}B_H^{-1}rf(s)\,ds.
\end{align*}
Setting $\tilde{x}=x^{\nu_1}+\mathrm{w}\bar{x}$ and $x^{(\nu_1)}=\mathcal{K}_1u^{(\nu-1)}$ into the equation (\ref{final}) we obtain
\begin{align*}
u^{(\nu)}&=\mathcal{N}u^{(\nu-1)}+\psi,\\
\mathcal{N}u(t)&=\mathcal{K}^{\nu_3}\mathcal{CR}\mathcal{K}^{\nu_1}u(t),\\
\mathcal{CR}u(t)&=(I-pB_H^{-1}rB_h)(I+\mathrm{w}\mathcal{K}^{\nu_2}(I-(1+\frac{1}{\mathrm{w}^2})pB_H^{-1}rB_h))u(t)\\&+\mathcal{CR}_cu(t),\\
\mathcal{CR}_cu(t)&=C_{2c}\star(I+\mathrm{w}\mathcal{K}^{\nu_2}(I-(1+\frac{1}{\mathrm{w}^2})pB_H^{-1}rB_h))u(t)\\
&+\mathrm{w}(I-pB_H^{-1}rB_h)\mathcal{K}^{\nu_2}(C_{1c}\star u(t))\\
&+\mathrm{w}C_{2c}\star ( \mathcal{K}^{\nu_2}C_{1c}\star u(t)).
\end{align*} 
where $\psi=\mathrm{w}\mathcal{K}^{\nu_3}\mathcal{C}_2\mathcal{K}^{\nu_2}\phi_1+\mathcal{K}^{\nu_3}\phi_2$. So
\begin{align*}
\mathcal{N}u(t)&=(M_{B_h}^{-1}N_{B_h})^{\nu_3}(I-pB_H^{-1}rB_h)\times\\
&(I+\mathrm{w}(M_{B_h}^{-1}N_{B_h})^{\nu_2}(I-(1+\frac{1}{\mathrm{w}^2})pB_H^{-1}rB_h))(M_{B_h}^{-1}N_{B_h})^{\nu_1}u(t)\\
&+\mathcal{N}_cu(t).
\end{align*}
Operator $\mathcal{N}_c$ is a linear combination of product of linear Volterra convolution operator $\mathcal{K}_c$ and $\mathcal{CR}_c$. Thus it is itself of linear Volterra convolution type.
\end{proof}

Let $e^{(\nu)}$ be the error of the $\nu$-th  iteration of the new two-grid waveform relaxation method, i.e., $e^{(\nu)}=u^{(\nu)}-u$. Using lamma \ref{lemNew}, we can conclude $e^{(\nu)}=\mathcal{N}e^{(\nu-1)}$. Therefore the Laplace transform of this equation is as follows
\begin{align*}
\tilde{e}^{(\nu)}&={\bf N}\tilde{e}^{(\nu-1)},\\
{\bf N}&={\bf K}^{\nu_3}{\bf R}{\bf K}^{\nu_1},\\
{\bf K}&=(zM_{B_h}+M_{A_h})^{-1}(zN_{B_h}+N_{A_h}),\\
{\bf R}&=(I-pB_H^{-1}rB_h)(I+\mathrm{w}{\bf K}^{\nu_2}(I-(1+\frac{1}{\mathrm{w}^2})B_H^{-1}rB_h))\tilde{e}^{(\nu-1)}\\
       &+{\bf L}(I+\mathrm{w}{\bf K}^{\nu_2}(I-(1+\frac{1}{\mathrm{w}^2})B_H^{-1}rB_h))\tilde{e}^{(\nu-1)}\\
       &+\mathrm{w}(I-pB_H^{-1}rB_h){\bf K}^{\nu_2}[(1+\frac{1}{\mathrm{w}^2}){\bf L}]\tilde{e}^{(\nu-1)}+\mathrm{w}{\bf L}[(1+\frac{1}{\mathrm{w}^2}){\bf L}]\tilde{e}^{(\nu-1)}\\
       &+{\bf L}\tilde{e}^{(\nu-1)},\\
{\bf L}&=p(A_H+zB_H)^{-1}(A_HB_H^{-1}rB_h-rA_h).\\
\end{align*}

Now, we can conclude the convergence of the new two-grid waveform relaxation method in the finite time interval by using  Lemma \ref{lemma2.1}.
\begin{lem}\label{1}
The new two-grid waveform relaxation operator $\mathcal{N}$ is a bounded operator in $C[0,T]$ and
\begin{equation}\label{rho}
\begin{split}
&\rho (\mathcal{N})=\rho((M_{B_h}^{-1}N_{B_h})^{\nu_3}(I-pB_H^{-1}rB_h)\times\\
&(I+\mathrm{w}(M_{B_h}^{-1}N_{B_h})^{\nu_2}(I-(1+\frac{1}{\mathrm{w}^2})pB_H^{-1}rB_h))(M_{B_h}^{-1}N_{B_h})^{\nu_1}).
\end{split}
\end{equation} 
\end{lem}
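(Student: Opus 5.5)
The plan is to apply Lemma \ref{lemma2.1} to the operator $\mathcal{N}$ from Lemma \ref{lemNew}. By the last display in that proof, $\mathcal{N}=N_0+\mathcal{N}_c$. Here $N_0$ is the constant matrix on the right-hand side of \eqref{rho}, and $\mathcal{N}_c$ is a linear combination of products of the Volterra convolution operators $\mathcal{K}_c$, $\mathcal{C}_{1c}$, $\mathcal{C}_{2c}$ with constant matrix factors. So $\mathcal{N}$ has exactly the form $\mathcal{H}=H+\mathcal{H}_c$ treated in Lemma \ref{lemma2.1}, with $H=N_0$. The statement then follows once we know that the kernel $n_c$ of $\mathcal{N}_c$ lies in $C[0,T]$.

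The first step is to write $\mathcal{N}_c$ out explicitly. I would expand each $\mathcal{K}$ factor in $\mathcal{K}^{\nu_3}\mathcal{CR}\mathcal{K}^{\nu_1}$ as $M_{B_h}^{-1}N_{B_h}+\mathcal{K}_c$, and likewise for the $\mathcal{K}^{\nu_2}$ inside $\mathcal{CR}$. I would also use the splitting $\mathcal{CR}=R_0+\mathcal{CR}_c$ already obtained in Lemma \ref{lemNew}. Multiplying out, the single term containing no convolution factor is $N_0$. Every other term is a finite product of constant matrices and at least one convolution operator. I would use two elementary facts: (i) a constant matrix times a convolution operator with kernel $k$ is convolution with kernel $Mk$ or $kM$; (ii) $(k_1\star)(k_2\star)=(k_1\star k_2)\star$. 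With these, each such term is a single convolution, and $n_c$ is a finite sum of matrix products and iterated convolutions of $k_c$, $C_{1c}$, $C_{2c}$.

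The second step is continuity. The kernels $k_c(t)=e^{-M_B^{-1}M_At}M_B^{-1}(N_A-M_AM_B^{-1}N_B)$, $C_{1c}$ and $C_{2c}$ are matrix exponentials multiplied by constant matrices, so they are continuous on $[0,T]$. This requires $M_{B_h}$ and $B_H$ to be invertible: $B_H$ is a mass matrix and hence SPD, and $M_{B_h}$ is lower triangular with the positive diagonal of the mass matrix. The convolution of two functions in $C[0,T]$ is again in $C[0,T]$, since $|k_1\star k_2(t)-k_1\star k_2(t')|\to0$ by uniform continuity and boundedness. Hence $n_c\in C[0,T]$. Lemma \ref{lemma2.1} then gives boundedness of $\mathcal{N}$ on $C[0,T]$ and $\rho(\mathcal{N})=\rho(N_0)$, which is \eqref{rho}.

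The main obstacle is the bookkeeping in the first step. The expression for $\mathcal{CR}_c$ in Lemma \ref{lemNew} composes convolutions with $\mathcal{K}^{\nu_2}$, and $\mathcal{K}^{\nu_2}$ itself has a nonzero constant part. I must check that no hidden constant (non-convolution) term escapes into $\mathcal{N}_c$, so that the constant part is exactly the matrix in \eqref{rho}. I would handle this by an induction on the number of factors. The claim is that the set of operators $M+k\star$ with $M$ a constant matrix and $k\in C[0,T]$ forms an algebra under composition and sums. Within this algebra, the constant part of a product is the product of the constant parts: $(M_1+k_1\star)(M_2+k_2\star)=M_1M_2+(M_1k_2+k_1M_2+k_1\star k_2)\star$. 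This identity gives the constant part of $\mathcal{N}$ directly as the product of the constant parts of $\mathcal{K}^{\nu_3}$, $\mathcal{CR}$ and $\mathcal{K}^{\nu_1}$. I would take these constant parts as read off in Lemma \ref{lemNew}, namely $M_{B_h}^{-1}N_{B_h}$ for $\mathcal{K}$ and $(I-pB_H^{-1}rB_h)(I+\mathrm{w}(M_{B_h}^{-1}N_{B_h})^{\nu_2}(I-(1+\frac{1}{\mathrm{w}^2})pB_H^{-1}rB_h))$ for $\mathcal{CR}$. Their product is exactly the matrix in \eqref{rho}.
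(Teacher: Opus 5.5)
Your proposal is correct and follows the paper's own route. You split $\mathcal{N}=N_0+\mathcal{N}_c$ as in Lemma~\ref{lemNew}, check that the kernel of $\mathcal{N}_c$ (built from the continuous kernels $k_c$, $C_{1c}$, $C_{2c}$) lies in $C[0,T]$, and invoke Lemma~\ref{lemma2.1}. Your identity $(M_1+k_1\star)(M_2+k_2\star)=M_1M_2+(M_1k_2+k_1M_2+k_1\star k_2)\star$ and the invertibility check for $M_{B_h}$ and $B_H$ only make explicit what the paper's two-line proof leaves implicit.

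One caveat applies to the paper as much as to you. Both arguments take the constant part of $\mathcal{CR}$ from Lemma~\ref{lemNew}, so \eqref{rho} describes Algorithm~2 only as well as that lemma does. Its derivation of $\bar x$ treats the $\nu_2$ post-smoothing steps as the purely linear map $\mathcal{K}^{\nu_2}$, yet their right-hand side $f'_h$ depends on $x^{(\nu_1)}$, which adds a linear term to the map $x^{(\nu_1)}\mapsto\bar x$ that the lemma omits when $\nu_2\geq 1$.
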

\begin{proof}
Both $\mathcal{K}_c$ and $\mathcal{CR}_c$ are continuous operators on $[0,T]$, So, we can conclude the kernel of $\mathcal{N}_c$ also will be in the interval $[0,T]$. Thus Lemma \ref{lemma2.1} guarantees the presented expression for the spectral radius of the operator $\mathcal{N}$.  
\end{proof}

\begin{lem}\label{2} Suppose all eigenvalues of $B_H^{-1}A_H$ and $M_{B_h}^{-1}M_{A_h}$ have positive real parts and
 $\mathcal{N}$ be an operator in $L_p(0,\infty)$ with $1\leq p< \infty $, then $\mathcal{N}$ is a bounded operator with spectral radius 
\begin{equation}
\begin{split}
\rho(\mathcal{N})&=\sup_{Re(z)\geq 0}\rho({\bf N}(z))\\
&=\sup_{\xi\in \mathbb{R}}\rho({\bf N}(i\xi)).
\end{split}
\end{equation}
\end{lem}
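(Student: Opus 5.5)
The plan is to reduce the statement to Lemma \ref{lemma2.2}. By Lemma \ref{lemNew}, $\mathcal{N}=N+\mathcal{N}_c$, where
\[N=(M_{B_h}^{-1}N_{B_h})^{\nu_3}(I-pB_H^{-1}rB_h)\Bigl(I+\mathrm{w}(M_{B_h}^{-1}N_{B_h})^{\nu_2}\bigl(I-(1+\tfrac{1}{\mathrm{w}^2})pB_H^{-1}rB_h\bigr)\Bigr)(M_{B_h}^{-1}N_{B_h})^{\nu_1}\]
is a constant matrix and $\mathcal{N}_c$ is a linear Volterra convolution operator. It is a finite linear combination of products of the constant matrices $M_{B_h}^{-1}N_{B_h}$, $I-pB_H^{-1}rB_h$ with the convolution operators $\mathcal{K}_c$, $\mathcal{C}_{1c}$, $\mathcal{C}_{2c}$. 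So the only hypothesis of Lemma \ref{lemma2.2} to check is that the kernel $n_c$ of $\mathcal{N}_c$ lies in $L_1(0,\infty)$. Once that holds, boundedness on $L_p(0,\infty)$ and the two formulas for $\rho(\mathcal{N})$ follow, with ${\bf N}(z)=N+{\bf N}_c(z)$.

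First I verify integrability of the elementary kernels. The kernel $k_c(t)=e^{-M_{B_h}^{-1}M_{A_h}t}M_{B_h}^{-1}(N_{A_h}-M_{A_h}M_{B_h}^{-1}N_{B_h})$ decays exponentially, since all eigenvalues of $M_{B_h}^{-1}M_{A_h}$ have positive real part. Indeed, $\|e^{-Gt}\|\le Ce^{-\alpha t}$ for some $\alpha>0$ whenever the spectrum of $G$ lies in the open right half-plane; Jordan blocks contribute only polynomial factors. Similarly $C_{1c}$ and $C_{2c}$ contain the factor $e^{-B_H^{-1}A_Ht}$, so they lie in $L_1(0,\infty)$ by the assumption on $B_H^{-1}A_H$.

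Next I use the facts that $L_1(0,\infty)$ is a Banach algebra under convolution ($\|f\star g\|_1\le\|f\|_1\|g\|_1$) and that multiplying a kernel by a constant matrix preserves $L_1$. Hence every term in the expansion of $\mathcal{K}^{\nu_3}\mathcal{CR}\mathcal{K}^{\nu_1}$ that carries at least one convolution factor has an $L_1$ kernel. Here $\mathcal{K}^{m}=(M_{B_h}^{-1}N_{B_h}+\mathcal{K}_c)^m$ is expanded binomially, keeping the noncommutative order, and $\mathcal{CR}_c$ is taken in the explicit form from the proof of Lemma \ref{lemNew}. Therefore $n_c\in L_1(0,\infty)$.

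The main obstacle is bookkeeping rather than analysis. I have to check that the constant part really is the displayed $N$, meaning every non-convolution contribution is collected. I also have to check that the Laplace transform of $\mathcal{N}$ is the product ${\bf K}^{\nu_3}{\bf R}{\bf K}^{\nu_1}$, which uses the convolution theorem termwise. For this I would compute the transforms directly: $\mathcal{K}\mapsto{\bf K}(z)=(zM_{B_h}+M_{A_h})^{-1}(zN_{B_h}+N_{A_h})$, and $\mathcal{C}_i\mapsto I-c_ip(zB_H+A_H)^{-1}r(zB_h+A_h)$ with $c_1=1+1/\mathrm{w}^2$ and $c_2=1$. Both identities hold for $\mathrm{Re}(z)\ge 0$ because the exponentials decay there. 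With ${\bf N}(z)$ identified, Lemma \ref{lemma2.2} gives $\rho(\mathcal{N})=\sup_{\mathrm{Re}(z)\ge0}\rho({\bf N}(z))=\sup_{\xi\in\mathbb{R}}\rho({\bf N}(i\xi))$.
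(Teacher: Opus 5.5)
Your proof is correct. It shares the paper's skeleton: show that the kernel of $\mathcal{N}_c$ lies in $L_1(0,\infty)$, then invoke Lemma~\ref{lemma2.2}. However, you establish the integrability by a genuinely different argument.

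The paper works in the Laplace domain. It writes ${\bf N}_c(z)={\bf N}(z)-\lim_{z\to\infty}{\bf N}(z)$ and asserts that its entries are rational functions vanishing at infinity. Their poles, the zeros of $\det(zM_{B_h}+M_{A_h})$ and $\det(zB_H+A_H)$, lie in the open left half-plane. By inverse Laplace transformation, $n_c$ is then a finite sum of decaying exponential polynomials.

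You stay in the time domain. You combine exponential decay of the three elementary kernels $k_c$, $C_{1c}$, $C_{2c}$ with the fact that $L_1(0,\infty)$ is a convolution algebra stable under multiplication by constant matrices.

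Your route is more elementary and gives an explicit bound on $\|n_c\|_{L_1}$. It also never requires locating the poles of the composite symbol, which the paper only asserts. The price is the noncommutative expansion of $\mathcal{K}^{\nu_3}\mathcal{C}_2(I+\mathrm{w}\mathcal{K}^{\nu_2}\mathcal{C}_1)\mathcal{K}^{\nu_1}$, plus a separate computation of the symbols ${\bf K}(z)$ and ${\bf C}_i(z)$. The paper's route instead works directly with ${\bf N}(z)$, the object in the final formula.

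Your symbol computation yields ${\bf N}(z)={\bf K}^{\nu_3}(z){\bf C}_2(z)\bigl(I+\mathrm{w}{\bf K}^{\nu_2}(z){\bf C}_1(z)\bigr){\bf K}^{\nu_1}(z)$, which is the right object to feed into Lemma~\ref{lemma2.2}. Do not try to reconcile it term by term with the paper's displayed ${\bf R}$. That display omits $p$ in $(1+\frac{1}{\mathrm{w}^2})B_H^{-1}rB_h$ and omits ${\bf K}^{\nu_2}$ between the two ${\bf L}$ factors. It also counts the bare ${\bf L}$ term twice.
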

\begin{proof}The positivity of $B_H^{-1}A_H$ and $M_{B_h}^{-1}M_{A_h}$ can be concluded from the boundedness of 
the waveform relaxation operator $\mathcal{K}$
 and from the boundedness of  the analytical solution of the equation $\eqref{1.2}$ ( on  $\Omega_H$).
 On the other hand, we can easily obtain the equality
$${\bf N}_c(z)={\bf N}-\lim_{z\rightarrow \infty}{\bf N}(z).$$
We can conclude from positivity of real parts  of all eigenvalues of $B_H^{-1}A_H$ and $M_{B_h}^{-1}M_{A_h}$, the entries of ${\bf N}_c(z)$ are rational functions of $z$  vanishing at infinity,  all of whose poles have negative real part. Therefore, this statement together with using an inverse Laplace transformation argument implies that kernel of $\mathcal{N}_c$ is embedded in the  $ L_1(0,\infty)$. Finally, the proof will be complete  following Lemma \ref{lemma2.2}.
\end{proof}
\section{Numerical results}
In this section, we compare the numerical solution of the three methods: The Gauss-Seidel waveform relaxation, the classical two-grid waveform relaxation and the new two-grid waveform relaxation in term of convergence factors using two different examples. We consider the following two dimensional heat equation.
\begin{align*}
\frac{\partial u}{\partial t}-\Delta_2u=f,\quad (x,y)\in [0,1]\times [0,1],\quad t\in [0,1].
\end{align*}
In both cases, the classical two-grid waveform relaxation and the new two-grid waveform relaxation, we consider a linear finite element discretization on a uniform triangular spatial mesh as mentioned before leads to the following ODE system,
\begin{align*}
B_h\dot{u}_h(t)+A_hu_h(t)=F_h(t).
\end{align*}
 In the case of dealing with a structured grid, however, it suffices to represent the discrete operators utilizing stencils. The corresponding stencils obtained for the mass and stiffness matrices in two dimensions are as follows:
\[
B_h=\frac{1}{12}\left[\begin{matrix}
&1&1\\
1&6&1\\
1&1&
\end{matrix}\right], \qquad
A_h= \frac{1}{h^2}\left[\begin{matrix}
&-1&\\
-1&4&-1\\
&-1&
\end{matrix}\right].
\]
Regarding the intergrid transfer operators, the stencil of the restriction operator, $H_{h}^{2h}$, is given as follows,
\[
I_{h}^{2h}=\frac{1}{8}\left[\begin{matrix}
&1&1\\
1&2&1\\
1&1&
\end{matrix}\right].
\]
The prolongation operator, $I_{2h}^h$, is obtained according to the relation $I_{h}^{2h}=\frac{1}{4}I_{2h}^h$, \cite{ulrich}. 

The convergence factors, presented in Tables  \ref{T1} and \ref{T2}, are computed using the following division
\[
\rho^{(\nu)}=\frac{||e_h^{(\nu)}||}{||e_h^{(\nu-1)}||}.
\]

To compute the numerical solution of the NTGW, we use the optimum value of parameter $\omega$ obtained from the following algorithm \cite{moghadery}.
\begin{table}[!h]
		\begin{tabular}{l}
			\hline
			{\bf Agorithm 3} The optimum value of the parameter  $\omega$.\\
			\hline
			1) $q=(aa:hh:bb);\hspace{.5cm}(where\,\, aa=\frac{a-1}{a^2}, bb=\frac{1-a}{a^2}\,\, and \,\, 0<hh<1)$\\
			2) for $k=1:length(q)$\\
			$\qquad \omega=q(k);$\\
			$\qquad u=$ Applying  Algorithm\! 2;\\
			$\qquad error=u_{exact}-u;$\\
			$\qquad max(k)=$ Compute infinity norm of the error;\\
			$end$\\
			3) plot $(q,max).$\\
			\hline
		\end{tabular}
	\end{table}
\begin{figure}[!ht]
\caption{The optimum value of the parameter $\omega$ on mesh $49\times 49$ with $\nu_1=\nu_2=\nu_3=1$.	}\label{sh7}
\centerline{\includegraphics[width=10cm]{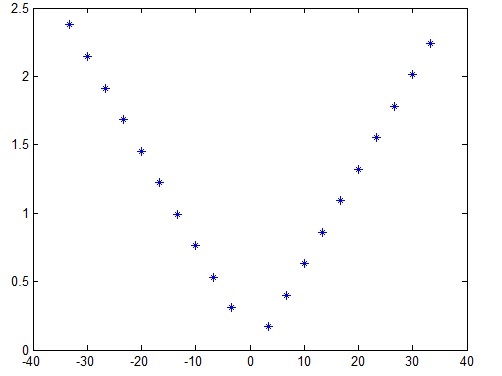}}
\end{figure}

	We illustrate the resulting values of the parameter $\omega$ in Figure \ref{sh7}  by considering spatial mesh $49\times 49$  with one iteration of smoothing step in NTGW method. Also, a
comparable results related to the different spatial discretization are presented
in Table \ref{opt_w}
\begin{table}[!ht]
\caption{\footnotesize {Optimum value of the parameter $\mathrm{w}$ for several number of $h$ and by new time-dependent two-grid method. }}\label{t2}
	\begin{tabular}{cccccc}
	\hline \hline
h&        $\frac{1}{8}$ & $\frac{1}{16}$& $\frac{1}{32}$&$\frac{1}{64}$&$\frac{1}{128}$\\
\hline
optimum value of $\mathrm{w}$&  0.8091& 0.6179& 0.1887&0.1107& 0.0847\\

\hline
\end{tabular}\label{opt_w}
\end{table}

For the first example, we consider the Dirichlet boundary and initial conditions
 in such a way that they admit in the following exact solution
 \[
 u(x,y,t)= t^2 \sin(\frac{\pi x}{2})\sin(\frac{\pi y}{2})	
 \]
We assume the finite element discretization by linear basis functions, on a fixed spatial finite element mesh size $h=2^{-7}$ together with the Crank-Nicolson time-discretization which its mesh size is varying from $0.04$ to $0.001$. We notice that the smaller time steps are more applicable to investigate the spectral radius of the continuous waveform relaxation method \cite{parallel}.
 As mentioned before, we consider the standard coarsening, full weighting restriction and linear interpolation as inter-grid transfer operators. Also, to compute coarse grid operators $A_H$ and $B_H$, we pursue the DCA discretization.
In Table \ref{T1},  we show the comparative convergence factors of the Gauss-Seidel waveform relaxation, the classical two-grid waveform relaxation and the new two-grid waveform relaxation together with the necessary iterations to reduce the initial residual by a factor $10^{-10}$ and the corresponding CPU time and also the norms of the errors, which are constant for the three schemes where the method is convergent. As you can see the behavior of the proposed new two-grid waveform relaxation method  is satisfactory. 
\begin{table}[ht]
\caption{ Results of example 1. The comparative convergence factors of the Gauss-Seidel waveform relaxation (GS), the classical two-grid waveform relaxation (TG) and the new two-grid waveform relaxation (NTG) where $h=2^{-7}$ and $\tau$ is varying from $0.04$ to $0.001$. 
}\label{T1}
\scalebox{0.76}{%
	\begin{tabular}{|c|c|cccccc|}
	\hline 
	$\tau$ & method & iter & $||.||_{\infty}$ & $||.||_{2}$ & $\rho$ & CPU & res\\ 
\hline   \hline
     &GS& $5000$ & $1.3017\times 10^{-5}$ & $1.0126\times 10^{-4}$ & $0.9991$ & $891$ s & $0.0067$\\
$0.04$& TG& $14$ & $4.2959\times 10^{-5}$ & $3.4724\times 10^{-4}$ & $0.2617$ & $10$ s  & $9.9599\times 10^{-11}$\\
     & NTG & $9$ & $4.2959\times 10^{-5}$ & $3.4724\times 10^{-4}$ & $0.1247$ & $9$ s   & $7.8106\times 10^{-11}$\\ \hline
    & GS &$5000$ & $1.3272\times 10^{-5}$ & $1.0845\times 10^{-4}$ & $0.9959$ & $876$ s & $4.3890\times 10^{-4} $\\
$0.02$& TG& $13$ & $1.4530\times 10^{-5}$ & $2.0134\times 10^{-4}$ & $0.2354$ & $8$ s  & $4.7462\times 10^{-11}$\\
      &NTG& $8$  & $1.4530\times 10^{-5}$ & $2.0134\times 10^{-4}$ & $0.1030$ & $8$ s   & $7.7974\times 10^{-11}$\\ \hline
     & GS & $3625$& $3.8156\times 10^{-6}$ & $3.1738\times 10^{-5}$ & $0.9856$ & $716$ s & $9.9589\times 10^{-11} $\\ 
$0.01$& TG& $12$  & $3.8156\times 10^{-6}$ & $3.1738\times 10^{-5}$ & $0.2368$ & $10$ s  & $2.5053\times 10^{-11}$\\
     &NTG& $7$    & $3.8156\times 10^{-6}$ & $3.1738\times 10^{-5}$ & $0.0955$ & $10$ s  & $9.4050\times 10^{-11}$\\ \hline
     & GS & $1809$& $1.0339\times 10^{-6}$ & $8.7701\times 10^{-6}$ & $0.9724$ & $429$ s & $9.7655\times 10^{-11} $\\ 
$0.005$& TG& $10$ & $1.0339\times 10^{-6}$ & $8.7701\times 10^{-6}$ & $0.2569$ & $6$ s  & $5.1234\times 10^{-11}$\\          
       &NTG&  $7$ & $1.0339\times 10^{-6}$ & $8.7701\times 10^{-6}$ & $0.0952$ & $7$ s  & $1.1617\times 10^{-11}$\\ \hline
      & GS & $895$& $2.6999\times 10^{-7}$ & $2.3256\times 10^{-6}$ & $0.9478$ & $165$ s & $9.9510\times 10^{-11} $\\ 
$0.0025$& TG& $9$ & $2.6999\times 10^{-7}$ & $2.3256\times 10^{-6}$ & $0.2188$ & $5$ s  & $2.4466\times 10^{-11}$\\          
      &NTG&  $6$ & $2.6999\times 10^{-7}$ & $2.3256\times 10^{-6}$ & $0.0946$ & $7$ s  & $1.4975\times 10^{-11}$\\ \hline   
     & GS & $353$& $4.4362\times 10^{-8}$ & $3.8769\times 10^{-7}$ & $0.8836$ & $67$ s & $9.3922\times 10^{-11} $\\ 
$0.001$& TG& $7$ & $4.4362\times 10^{-8}$ & $3.8769\times 10^{-7}$ & $0.2347$ & $4$ s  & $2.8991\times 10^{-11}$\\          
      &NTG&  $6$ & $4.4362\times 10^{-8}$ & $3.8769\times 10^{-7}$ & $0.0610$ & $4$ s  & $8.9077\times 10^{-11}$\\ \hline                
\end{tabular}
}
\end{table}

For the second example, the homogeneous Dirichlet boundary conditions and the initial condition are chosen such that the analytical solution is given by
 $$u(x,y,t)=1+\sin(\pi x/2)\sin(\pi y/2)\exp (-\pi^2t/2).$$
Again we consider a fixed spatial finite element mesh size $h=2^{-7}$ together with varying time-steps from $0.04$ to $0.001$.


In Table \ref{T2}, we have reported the comparable results of the numerical  averaged convergence factors applying three methods: Gauss-Seidel approximation, classical two-grid and new two-grid scheme together with the necessary iterations to reduce the initial residual by a factor $10^{-10}$ and the corresponding CPU time and also the norms of the errors, which are constant for the three schemes where the method is convergent. As we can see, the results concerning the new two-grid method is satisfactory. 

\begin{table}[ht]
\caption{\footnotesize {Results of example 2. The comparative convergence factors of the Gauss-Seidel waveform relaxation (GS), the classical two-grid waveform relaxation (TG) and the new two-grid waveform relaxation (NTG) where $h=2^{-7}$ and $\tau$ is varying from $0.04$ to $0.001$. 
}}\label{T2}
\scalebox{0.76}{%
	\begin{tabular}{|c|c|cccccc|}
	\hline 
	$\tau$ & method & iter & $||.||_{\infty}$ & $||.||_{2}$ & $\rho$ & CPU & res\\ 
\hline   \hline
     &GS& $5000$ & $0.0230$ & $0.1840$ & $0.9993$ & $899$ s & $0.0534$\\
$0.04$& TG& $19$ & $5.2134\times 10^{-5}$ & $0.0042$ & $0.3436$ & $10$ s  & $8.5999\times 10^{-11}$\\
     & NTG & $12$ & $5.2134\times 10^{-5}$ & $0.0042$ & $0.1399$ & $9$ s   & $9.1255\times 10^{-11}$\\ \hline
    & GS &$5000$ & $3.2446\times 10^{-4}$ & $0.0026$ & $0.9975$ & $920$ s & $0.0320$\\
$0.02$& TG& $19$ & $3.2446\times 10^{-4}$ & $0.0026$ & $0.3009$ & $9$ s  & $4.3306\times 10^{-11}$\\
      &NTG& $12$  & $3.2446\times 10^{-4}$ & $0.0026$ & $0.1379$ & $9$ s   & $4.6382\times 10^{-11}$\\ \hline
     & GS & $4237$& $1.2487\times 10^{-4}$ & $0.0010$ & $0.9844$ & $769$ s & $9.9380\times 10^{-11} $\\ 
$0.01$& TG& $18$  & $1.2487\times 10^{-4}$ & $0.0010$ & $0.2148$ & $8$ s  & $7.5243\times 10^{-11}$\\
     &NTG& $12$    & $1.2487\times 10^{-4}$ & $0.0010$ & $0.1379$ & $9$ s  & $2.3113\times 10^{-11}$\\ \hline
     & GS & $2168$& $5.1473\times 10^{-5}$ & $4.3197\times 10^{-4}$ & $0.9689$ & $433$ s & $9.7812\times 10^{-11} $\\ 
$0.005$& TG& $18$ & $5.1473\times 10^{-5}$ & $4.3197\times 10^{-4}$ & $0.2487$ & $8$ s  & $4.4703\times 10^{-11}$\\          
       &NTG&  $11$ & $5.1473\times 10^{-5}$ & $4.3197\times 10^{-4}$ & $0.1238$ & $8$ s  & $8.9182\times 10^{-11}$\\ \hline
      & GS & $1103$& $2.5114\times 10^{-5}$ & $2.2147\times 10^{-4}$ & $0.9397$ & $208$ s & $9.7242\times 10^{-11} $\\ 
$0.0025$& TG& $17$ & $2.5114\times 10^{-5}$ & $2.2147\times 10^{-4}$ & $0.2798$ & $8$ s  & $9.6319\times 10^{-11}$\\          
      &NTG&  $11$ & $2.5114\times 10^{-5}$ & $2.2147\times 10^{-4}$ & $0.1323$ & $8$ s  & $5.5868\times 10^{-11}$\\ \hline 
     & GS & $453$& $1.0493\times 10^{-5}$ & $9.6229\times 10^{-5}$ & $0.8611$ & $85$ s & $9.2321\times 10^{-11} $\\ 
$0.001$& TG& $17$ & $1.0493\times 10^{-5}$ & $9.6229\times 10^{-5}$ & $0.3095$ & $8$ s  & $6.1004\times 10^{-11}$\\          
      &NTG&  $11$ & $1.0493\times 10^{-5}$ & $9.6229\times 10^{-5}$ & $0.1690$ & $8$ s  & $5.7011\times 10^{-11}$\\ \hline                
\end{tabular}
}
\end{table}

\section{Conclusions}
This work is devoted to extending the new two-grid method introduced in \cite{moghadery} to the case when the problem is time-dependent, using the known heat equation.  They presented a new multigrid for Poisson equation based on finite   difference technique. This work is more general than their work in the following aspects:
\begin{itemize}
\item NTGW is applicable for time-dependent equations.
\item Using the finite element discretization leads to more general ODE system against the finite difference method. 
\end{itemize}
 A method of line approach together with a finite element spatial discretization is considered. The resulting ODE system is solved by means of the new two-grid waveform relaxation. Our analysis of the convergence factor is based on the spectral radius of this new two-grid waveform relaxation operator.
The convergence property of the NTGW is analyzed by Lemmas \ref{1} and~~ \ref{2} based on spectral radius of this method. As mentioned in Tables \ref{T1}, \ref{T2} among popular methods such as Gauss-Seidel and classical two-grid, the NTGW has better averaged convergence factor. 

In this work, we studied the continuous-time case of the NTGW. So, studding on the discontinuous-time case can be considered as a future work. Also, using the NTGW as ODE solver for the other time-dependent equation is applicable.

\end{document}